\theoremstyle{plain}
\newtheorem{theorem}{Theorem}
\newtheorem{corollary}[theorem]{Corollary}
\newtheorem{lemma}[theorem]{Lemma}
\newtheorem{proposition}[theorem]{Proposition}
\theoremstyle{definition}
\newtheorem{remark}[theorem]{Remark}
\theoremstyle{remark}
\newcommand{\Spec}{\mbox{Spec}\ \! }
\begin{document}

\title{A note on perinormal domains}

\author{Tiberiu Dumitrescu and Anam Rani}

\address{Facultatea de Matematica si Informatica,University of Bucharest,14 A\-ca\-de\-mi\-ei Str., Bucharest, RO 010014, Romania}
\email{tiberiu@fmi.unibuc.ro, tiberiu\_dumitrescu2003@yahoo.com}

\address{Abdus Salam School of Mathematical Sciences, GC University,
Lahore. 68-B, New Muslim Town, Lahore 54600, Pakistan} \email{anamrane@gmail.com}

\begin{abstract}
\noindent
Recently,  N. Epstein and J. Shapiro introduced and studied the  perinormal domains: those domains $A$ whose going down overrings are flat $A$-modules. We show that every Pr\"ufer v-multiplication domain is perinormal and has no proper lying over overrings.
We also show that a treed  perinormal domain is a Pr\"ufer domain. 
We give two pull-back constructions that produce perinormal/non-perinormal domains.
\end{abstract}


\thanks{2010 Mathematics Subject Classification: Primary 13A15, Secondary 13F05.}
\keywords{Perinormal domain, generalized Krull domain, PVMD}

\maketitle

\section{Introduction}

In their recent paper \cite{ES}, N. Epstein and J. Shapiro introduced and studied the  {\em perinormal domains}: those domains $A$ whose going down overrings are flat $A$-modules. Due to \cite[Theorem 2]{R}, $A$ is perinormal if and only if whenever $B$ is a local overring of $A$, we have $B=A_{N\cap A}$ where $N$ is the maximal ideal of $A$. Here, as usual, a {\em going down overring} $B$ of $A$ means a ring between $A$ and its quotient field such that going down holds for $A\subseteq B$, that is, the induced spectral map $\Spec(B_Q)\rightarrow \Spec(A_{Q\cap A})$ is surjective for each $Q\in \Spec(B)$. The perinormality is a local property cf. \cite[Theorem 2.3]{ES}. 
Also studied in \cite{ES}  is the subclass of  {\em globally perinormal domains}: those domains $A$ whose going down overrings are fraction rings of $A$. So $A$ is globally perinormal if and only if $A$ is  perinormal and its flat overrings are fraction rings of $A$.

The purpose of this note is to extend some of the results in \cite{ES}. 
We obtain the following results (all needed definitions, though standard, are recalled in the next section at the place where they are used).
An essential domain has no proper lying over overrings (Theorem \ref{1}). Every P-domain \cite{MZ}  is perinormal (Theorem \ref{3}).
Consequently, a Pr\"ufer v-multiplication domain (PVMD)
is perinormal and has no proper lying over overrings (Corollary \ref{5}).
In particular, we retrieve \cite[Theorem 7.5]{ES} which states that a generalized Krull domain is perinormal (Corollary \ref{5}).
Using Heinzer's example \cite{H}, we remark that an essential domain is not necessarily perinormal (Remark \ref{8}).
A treed  perinormal domain is a Pr\"ufer domain (Proposition \ref{2}). We thus extend   \cite[Proposition 3.2]{ES} which says  that a one dimensional perinormal domain is a Pr\"ufer domain.
The pullback construction in \cite[Theorem 5.2]{ES}   produces perinormal domains starting from semilocal Krull domains.  
We extend this construction  relaxing the Krull domain hypothesis to P-domain and removing the semilocal restriction (Theorem \ref{11}). We also give a construction producing non-perinormal domains (Theorem \ref{12}).
In \cite[Theorem 6.3]{ES}, it was shown that a Krull domain with torsion divisor class group is globally perinormal.
We extend this result by showing that a PVMD  with torsion class group (e.g a GCD domain) is globally perinormal (Theorem \ref{6}). Consequently, an AGCD domain  is globally  perinormal if and only if it is integrally closed (Corollary \ref{13}). In Theorem \ref{14}, we slightly improve Theorem \ref{6}.

Throughout this paper all rings are (commutative unitary) integral domains. Any unexplained terminology is standard like in \cite{G}, \cite{Ha} or \cite{LM}.
\\[2mm]




\section{Results}

Let $A$ be a domain.
Call a prime ideal $P$ of $A$ a {\em valued prime}  if $A_P$ is a valuation domain.
Recall that  $A$ is said to be an {\em essential domain} if 
$A=\cap_{P\in G}A_P$ where $G$ is the set of valued primes of $A$.
As usual, an overring $B$ of $A$ is a {\em lying over overring} if the map $\Spec(B)\rightarrow \Spec(A)$ is surjective.


\begin{theorem}\label{1}
If $A$ is an essential domain and $B$ is a lying over overring of $A$, then $A=B$.
\end{theorem}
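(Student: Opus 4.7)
The plan is to show $B \subseteq A_P$ for every valued prime $P$ of $A$, since then $B \subseteq \bigcap_{P \in G} A_P = A$ by the essentialness hypothesis.

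Fix a valued prime $P$ of $A$, so $V := A_P$ is a valuation domain. By the lying over assumption, choose $Q \in \Spec(B)$ with $Q \cap A = P$. The goal is then to prove the equality $B_Q = V$; combined with the inclusion $B \subseteq B_Q$ this will finish the argument.

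To establish $B_Q = V$, I would exploit the well known fact that every overring of a valuation domain $V$ (inside its quotient field) is of the form $V_{P'}$ for a unique prime $P'$ of $V$, with $P'$ equal to the contraction to $V$ of the maximal ideal of that overring. Applying this with the overring $V \subseteq B_Q$ gives $B_Q = V_{P'}$ where $P'$ is the contraction to $V$ of the maximal ideal $QB_Q$. Now compute this contraction directly: $QB_Q \cap A = Q \cap A = P$, so $QB_Q \cap V = PV$, which is the maximal ideal of $V$. Hence $P' = PV$ and $B_Q = V_{PV} = V = A_P$, as desired.

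I expect the only subtle point to be the structural lemma that overrings of a valuation domain are its localizations at primes — everything else is a direct chase of contractions together with the definition of essentialness. Since this lemma is classical (see e.g.\ \cite{G}), the proof should be short.
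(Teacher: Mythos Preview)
Your proposal is correct and follows essentially the same route as the paper: pick a prime $Q$ of $B$ over each valued prime $P$, show $B_Q=A_P$, and intersect. The only difference is cosmetic---where the paper cites \cite[Theorem 26.1]{G} for the equality $A_P=B_Q$, you unpack that citation by invoking the structure of overrings of a valuation domain and tracking the contraction of $QB_Q$.
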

\begin{proof}
 Since $A$ is essential, $A=\cap_{P\in G}A_P$ where $G$ is the set of valued primes of $A$.
As  $A\subseteq B$ satisfies lying over, 
for each $P\in G$, we can choose $P'\in \Spec(B)$ such that $P'\cap A=P$. Since $A_P$ is a valuation domain, it follows that
 $A_P=B_{P'}$, cf. \cite[Theorem 26.1]{G}. Then $A\subseteq B\subseteq \cap_{P\in G}B_{P'}=\cap_{P\in G}A_P=A$.
So $B=A$.
\end{proof}

Recall \cite{MZ} that a domain $A$ is called a {\em P-domain} if
$A_P$ is a valuation domain for every  prime ideal  $P$ which is minimal over an ideal of the form $Aa:b$ with $a,b\in A$. 
A P-domain is an essential domain but not conversely, cf. \cite[Proposition 1.1]{MZ} and \cite{H}.
Moreover,  a fraction ring of a P-domain is still a P-domain, cf. \cite[Corollary 1.2]{MZ}.
We state the main result of this paper.
\begin{theorem}\label{3}
Every P-domain  is perinormal.
\end{theorem}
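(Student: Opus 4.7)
To prove Theorem \ref{3}, the plan is to invoke the local characterization of perinormality recalled in the introduction: it suffices to show that every local going-down overring $(B,N)$ of the P-domain $A$ satisfies $B=A_P$ where $P=N\cap A$. The inclusion $A_P\subseteq B$ is immediate, since every element of $A\setminus P$ lies outside $N$ and is therefore a unit in $B$. The burden is to prove that every $\beta\in B$ already lies in $A_P$.

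Given $\beta\in B$, I would write $\beta=b/a$ with $a,b\in A$, $a\neq 0$, and study the conductor $J=(A:_A\beta)=Aa:b$. Note that $\beta\in A_P$ is equivalent to $J\not\subseteq P$, so I argue by contradiction and assume $J\subseteq P$. Since $J$ has the form $Aa:b$, the P-domain hypothesis produces a prime $Q$ of $A$ minimal over $J$ with $Q\subseteq P$ (any prime chosen minimal among primes containing $J$ and contained in $P$ is in fact minimal over $J$, because a smaller prime containing $J$ would automatically sit inside $P$), and then $A_Q$ is a valuation domain.

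Next I would apply going-down to $A\subseteq B$: from $Q\subseteq P=N\cap A$ I get a prime $M\subseteq N$ of $B$ with $M\cap A=Q$. Localizing at $M$ yields a local overring $B_M$ of $A$; the inclusion $A_Q\subseteq B_M$ is automatic, and $MB_M\cap A=Q$ shows that $B_M$ dominates the valuation ring $A_Q$. Since a valuation domain admits no proper dominating local overring inside its fraction field, we obtain $B_M=A_Q$. Hence $\beta\in B\subseteq B_M=A_Q$; but $J\subseteq Q$ forces $\beta\notin A_Q$, a contradiction. So $J\not\subseteq P$ and $\beta\in A_P$, as required.

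The main obstacle is bridging the gap between the P-domain property, which only produces valued primes of the form ``minimal over some $Aa:b$'', and the contraction $P=N\cap A$ of an arbitrary local going-down overring, which need not itself be minimal over any such colon ideal. The conductor trick identifies a valued prime $Q$ below $P$, and the going-down hypothesis is exactly what allows that $Q$ to be realized as the contraction of a prime of $B$, at which point valuation-domain dominance eliminates the extra element $\beta$.
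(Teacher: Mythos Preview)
Your argument is correct, but it is organized quite differently from the paper's proof. The paper does not work element by element. Instead, for an arbitrary going-down overring $B$ of the P-domain $A$ and a prime $Q$ of $B$ with $P=Q\cap A$, it first observes that $A_P$ is again a P-domain (hence essential), then notes that going down makes $B_Q$ a lying-over overring of $A_P$, and finally invokes Theorem~\ref{1} (an essential domain has no proper lying-over overring) to conclude $A_P=B_Q$; Richman's criterion finishes. So the paper factors the work through the reusable statement ``essential $\Rightarrow$ no proper lying-over overring'' together with the localization stability of P-domains.

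Your route bypasses Theorem~\ref{1} and the essential-domain formalism entirely: the conductor $J=Aa:b$ pinpoints, for each individual $\beta\in B$, a single valued prime $Q\subseteq P$ tailored to $\beta$, and going down plus valuation dominance handle that one prime. This is more elementary in that it needs neither the fact that fraction rings of P-domains are P-domains nor the intersection argument of Theorem~\ref{1}; on the other hand, the paper's decomposition isolates Theorem~\ref{1} as an independent result and yields the ``Remark~\ref{4}'' philosophy (any localization-closed class with no proper lying-over overrings is perinormal), which your direct argument does not surface.
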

\begin{proof}
Let $A$ be a P-domain and $B$ a going down overring of $A$. Let $Q\in \Spec(B)$ and $P=Q\cap A$. By \cite[Proposition 1.1]{MZ} and \cite[Lemma 2.2]{ES}, $A_P$ is an essential domain. By going down, $B_Q$ is a lying over overring of $A_P$. So Theorem \ref{1}  applies to give $A_P=B_Q$. Thus $A\subseteq B$ is flat due to \cite[Theorem 2]{R}.
\end{proof}

\begin{remark}\label{8}
An essential domain is not necessarily perinormal. 
Indeed, the essential domain $D$ constructed in \cite{H}  has a one dimensional localization $D_P$ which is not a valuation domain. Then $D$ is not perinormal  cf. \cite[Proposition 3.2]{ES}.
\end{remark}

\begin{remark}\label{4}
The underlying idea of the proof of Theorem \ref{3} is very simple.
Let $\mathcal{D}$ a class of domains which is closed under localizations at prime ideals. If every $A\in \mathcal{D}$ has no proper lying over overring, then every $A\in \mathcal{D}$ is perinormal. Indeed, if $B$ is  a going down overring of $A$, $Q\in \Spec(B)$ and $P=Q\cap A$, then $A_P\subseteq B_Q$ satisfies lying over, so $A_P=B_Q$. 
\end{remark}

Let $A$ be a domain with quotient field $K$.
Recall that  $A$ is a {\em Pr\"ufer v-multiplication domain (PVMD)} if for every  finitely generated nonzero ideal $I$, there exists a finitely generated nonzero ideal $J$ such that $(IJ)_v$ is a principal ideal.
Here, as usual, for a fractional nonzero ideal $H$ of $A$, 
its divisorial closure ($v$-closure) is the fractional  ideal $H_v:=(H^{-1})^{-1}$ where $H^{-1}$ is fractional  ideal $A:H=\{x\in K\mid xH\subseteq A\}$. 
By \cite[Corollary 1.4 and Example 2.1]{MZ}, a PVMD is a P-domain but not conversely. 
It is well-known that a GCD domain is a PVMD, (see for instance \cite[Proposition 6.1]{MZ}). From these remarks and Theorem \ref{1}, we have


\begin{corollary}\label{5}
A PVMD (e.g. a GCD domain) is perinormal and  has no proper lying over overrings. 
\end{corollary}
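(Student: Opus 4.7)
The plan is to assemble the corollary directly from the two main ingredients already proved in the excerpt, namely Theorem \ref{1} (essential domains admit no proper lying over overrings) and Theorem \ref{3} (P-domains are perinormal). The only extra input needed is the fact, cited just before the corollary, that a PVMD is a P-domain \cite[Corollary 1.4]{MZ}, together with the fact that every P-domain is an essential domain \cite[Proposition 1.1]{MZ} (both already referenced in the paper).

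Concretely, I would proceed in two short steps. First, let $A$ be a PVMD. By \cite[Corollary 1.4]{MZ}, $A$ is a P-domain, so Theorem \ref{3} immediately yields that $A$ is perinormal. Second, since a P-domain is an essential domain by \cite[Proposition 1.1]{MZ}, Theorem \ref{1} gives that any lying over overring of $A$ coincides with $A$, i.e. $A$ has no proper lying over overrings. The parenthetical statement about GCD domains follows because a GCD domain is a PVMD, as recalled in the paragraph preceding the corollary (for instance via \cite[Proposition 6.1]{MZ}).

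There is essentially no obstacle here: the corollary is a formal consequence of Theorems \ref{1} and \ref{3}, and the proof reduces to invoking the known implication \emph{PVMD} $\Rightarrow$ \emph{P-domain} $\Rightarrow$ \emph{essential domain}. The only thing worth being careful about is that the two conclusions (perinormality and the absence of proper lying over overrings) are genuinely independent assertions, so both Theorem \ref{1} and Theorem \ref{3} must be applied; one does not subsume the other in general.
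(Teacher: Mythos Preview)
Your proposal is correct and follows exactly the paper's approach: the corollary is stated immediately after the remarks that a PVMD is a P-domain and a GCD domain is a PVMD, and the paper simply points to those remarks together with Theorem~\ref{1} (with Theorem~\ref{3} supplying perinormality). Your write-up is just a slightly more explicit version of the same chain of implications.
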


Let $A$ be a domain and $X^{1}(A)$ be the set of height one prime ideals of $A$. 
Recall \cite[section 43]{G} that   $A$ is a {\em generalized Krull domain} (resp. {\em Krull domain})  if $A=\cap_{P\in X^{1}(A)} A_P$, this intersection has finite character and $A_P$ is a valuation domain (resp. a discrete valuation domain) for each $P\in X^{1}(A)$. Clearly a Krull domain is a  generalized Krull domain.
Since a generalized Krull domain is a PVMD (cf. \cite[Theorem 7]{Gr}), we retrive 

\begin{corollary}{\em(\cite[Theorems 3.10 and 7.5]{ES})}\label{9}
A (generalized) Krull domain is perinormal.
\end{corollary}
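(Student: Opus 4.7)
The plan is to deduce this corollary from the chain of results already established in the paper, so the proof should essentially be a short citation argument. First I would observe that a Krull domain is, by definition, a generalized Krull domain (the only added constraint in the Krull case is discreteness of each $A_P$ for $P\in X^{1}(A)$), so it suffices to prove the statement for generalized Krull domains.

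Next I would invoke Griffin's theorem \cite[Theorem 7]{Gr}, which is cited in the paragraph immediately preceding the corollary, to conclude that every generalized Krull domain is a PVMD. At this point the result follows directly from Corollary \ref{5}, which asserts that every PVMD is perinormal. Chaining these two steps gives the required implication
\[
\text{Krull} \;\Longrightarrow\; \text{generalized Krull} \;\Longrightarrow\; \text{PVMD} \;\Longrightarrow\; \text{perinormal}.
\]

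There is really no obstacle to overcome: the nontrivial content has already been absorbed into Theorem \ref{3} and Corollary \ref{5}. If anything, the only thing to be careful about is transparently noting that this recovers \cite[Theorems 3.10 and 7.5]{ES}, so I would phrase the proof as a one-line chain of implications with the two external inputs (the definitional inclusion Krull $\subseteq$ generalized Krull, and Griffin's theorem) named explicitly, and then apply Corollary \ref{5}. No new lemmas or computations are needed.
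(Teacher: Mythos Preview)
Your proposal is correct and matches the paper's own argument exactly: the paper derives the corollary from the sentence preceding it, noting that a generalized Krull domain is a PVMD by \cite[Theorem 7]{Gr} and then invoking Corollary \ref{5}. No further work is required.
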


\begin{remark}
Recall that a {\em FC (finite conductor) domain} is a domain in which every intersection of two principal ideals is finitely generated. By \cite[Theorem 2]{M}, an integrally closed FC  domain has no proper lying over overrings. As both normality and FC condition localize, we derive that an integrally closed FC domain is perinormal, cf. Remark \ref{4}. But this is in fact  a consequence of Corollary \ref{5} because an integrally closed FC domain is a PVMD, cf. \cite[Corollary 2.5]{FZ}.
\end{remark}

Recall that a {\em treed domain} is a domain  whose incomparable prime ideals are comaximal.  As their localizations at the prime ideals are valuation domains, the Pr\"ufer domains are treed and perinormal. We prove the converse.

\begin{proposition}\label{2}
A treed  perinormal domain is a Pr\"ufer domain.
\end{proposition}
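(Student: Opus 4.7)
I would first reduce to the local case: perinormality is a local property by \cite[Theorem 2.3]{ES}, and the treed condition also passes to the localization at any maximal ideal $M$, since two primes of $A$ contained in $M$ cannot be comaximal and so must be comparable by treedness. It thus suffices to prove that a local treed perinormal domain $(A,M)$ is a valuation domain, noting that $\Spec(A)$ is then a chain. A short transitivity-of-going-down argument also shows that perinormality passes to the localization at \emph{any} prime of $A$, which is needed below.

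I would then induct on $d=\dim A$. The base case $d\le 1$ is \cite[Proposition 3.2]{ES}. For $d\ge 2$, let $P$ be the largest prime of $A$ strictly below $M$; by the inductive hypothesis $V:=A_P$ is a valuation ring with residue field $k:=V/PV$, and $A/P$ embeds in $k$ as a one-dimensional local subring. Pick any valuation ring $U$ of $k$ dominating $A/P$ (such $U$ exists by standard extension-of-valuations theorems), and form the composite $W:=\pi^{-1}(U)\subseteq V$, where $\pi\colon V\twoheadrightarrow k$ is the residue map. The classical composition-of-valuations theorem guarantees that $W$ is a valuation ring of the quotient field $K$ of $A$, with $A\subseteq W\subseteq V$. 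Its maximal ideal $N_W=\pi^{-1}(N_U)$ contracts in $A$ to $M$ (using $N_U\cap A/P=M/P$), so $W$ dominates $A$, and a prime-by-prime check shows that $\Spec(W)\to\Spec(A)$ is surjective: the primes of $V$ strictly below $PV$ recover the primes of $A$ strictly below $P$, while $\pi^{-1}(0)$ and $\pi^{-1}(N_U)$ recover $P$ and $M$ respectively.

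Because $\Spec(W)$ and $\Spec(A)$ are both chains and the contraction is surjective, the extension $A\subseteq W$ satisfies going down, so $W$ is a going-down overring of $A$. By perinormality $W$ is then flat over $A$, and \cite[Theorem 2]{R} yields $W=A_{N_W\cap A}=A_M=A$. Hence $A$ equals the valuation ring $W$, closing the induction. I expect the main obstacle to be the composite step, which uses the existence of a largest prime of $A$ strictly below $M$: this is automatic in finite Krull dimension, but for an infinite-dimensional $A$ one has to iterate the composite transfinitely (or appeal to a Zariski--Riemann style construction) to produce a single valuation overring dominating $A$ whose spectrum surjects onto $\Spec(A)$.
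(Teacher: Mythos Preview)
Your argument is correct in finite Krull dimension and identifies the right obstacle in the infinite case, but it takes a substantially longer route than the paper. The paper's proof is three lines: after localizing so that $(A,P)$ is local with $\Spec(A)$ a chain, it invokes the theorem of Kang and Oh \cite{KO}, which produces in one stroke a valuation overring $V$ of $A$ satisfying lying over. Since $\Spec(V)$ is a chain, lying over immediately gives going down, and perinormality forces $V=A$.

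What you are doing with the composite construction and the induction on $\dim A$ is essentially reproving the Kang--Oh result by hand in the special case of a single chain of primes: you manufacture the desired valuation overring one prime at a time via composition of places. This is perfectly valid for finite chains, and your honest remark that the infinite-dimensional case requires a transfinite composite or a Zariski--Riemann argument is exactly right --- that is precisely the content of \cite{KO}. So rather than setting up the induction, localizing at an intermediate prime, checking that perinormality survives that localization, and then worrying about the limit step, you can simply cite \cite{KO} and be done. Your approach does have the merit of being self-contained in the finite-dimensional case, but since the paper is happy to cite outside results elsewhere, the one-line appeal to \cite{KO} is cleaner and handles all dimensions uniformly.
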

\begin{proof}
Localizing, we  may assume that $A$ is local with maximal ideal $P$. Then $\Spec(A)$ is linearly ordered. By \cite{KO}, there exist a lying over valuation overring $V$ of $A$. Then $V$ is a going down overring of $A$. Since $A$ is  perinormal, it follows that  $A=V$ because  the maximal ideal of $V$ lies over $P$. 
\end{proof}

The   assertion $(a)$ below extends \cite[Proposition 4.4]{MZ} while assertion $(b)$ is essentially \cite[Proposition 3.2]{ES}.

\begin{corollary}\label{10}
$(a)$ A  treed  P-domain  is a Pr\"ufer domain. $(b)$ A one dimensional perinormal domain is a Pr\"ufer domain.
\end{corollary}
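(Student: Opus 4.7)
The plan is to derive both assertions of Corollary \ref{10} directly from results already established in the excerpt, with essentially no additional work required.

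For part (a), I would first invoke Theorem \ref{3}, which tells us that every P-domain is perinormal. Given a treed P-domain $A$, we therefore have that $A$ is both treed and perinormal, and Proposition \ref{2} immediately yields that $A$ is Pr\"ufer. So (a) reduces to a one-line application of Theorem \ref{3} followed by Proposition \ref{2}.

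For part (b), the only observation needed is that a one-dimensional domain is automatically treed. Indeed, in a one-dimensional domain the nonzero primes are precisely the maximal ideals, and the zero ideal is comparable with every prime. Any two incomparable primes must therefore be distinct maximal ideals, and distinct maximal ideals are always comaximal. Hence Proposition \ref{2} applies to any one-dimensional perinormal domain and forces it to be Pr\"ufer.

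There is no real obstacle here; the corollary is essentially a bookkeeping consequence of Theorem \ref{3} and Proposition \ref{2}, which together show that perinormality combined with a sufficiently ``tree-like'' prime spectrum upgrades to the Pr\"ufer property. The only minor point to verify is the trivial fact used in (b) that one-dimensionality implies treedness.
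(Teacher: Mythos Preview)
Your proposal is correct and follows exactly the same route as the paper: for (a), apply Theorem \ref{3} then Proposition \ref{2}; for (b), note that one-dimensional implies treed and invoke Proposition \ref{2}.
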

\begin{proof}
Combine Theorem \ref{3}, Proposition \ref{2} and the fact that a one dimensional  domain is treed.
\end{proof}

Our next result extends the pullback construction in \cite[Theorem 5.2]{ES} producing perinormal domains. 
While our proof uses the same idea, we relax the Krull domain hypothesis to P-domain and remove the semilocal restriction.

Let $B$ be a P-domain and $M_1,...,M_n$ maximal ideals of $B$ such that none of them is a valued prime. Assume further that 
all fields $B/M_i$ are isomorphic to the same field $K$
by isomorphisms   $\sigma_i:B/M_i\rightarrow K$, $i=1,...,n$.  
Set $I=M_1\cap\cdots\cap M_n$.
Let $\pi:B\rightarrow K^n$ be the composition  of the  canonical morphism $B\rightarrow B/I$, the Chinese Remainder Theorem morphism $B/I\rightarrow B/M_1\times\cdots\times B/M_n$ and $(\sigma_1,...,\sigma_n):B/M_1\times\cdots\times B/M_n\rightarrow K^n$. Finally, identify $K$ with its   diagonal image  in $K^n$.

\begin{theorem}\label{11}
In the setup above, the pulback domain $A=\pi^{-1}(K)$ is perinormal. 
\end{theorem}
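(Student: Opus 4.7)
The approach is to use that perinormality is a local property (\cite[Theorem 2.3]{ES}) and to show $A_P$ is perinormal for every $P \in \Spec(A)$. Since $A/I \cong K$, the ideal $M := I$ is maximal in $A$. For any other prime $P$ of $A$, maximality of $M$ yields $a \in I \setminus P$, and since $aB \subseteq I \subseteq A$ we have $A_a = B_a$; thus $A_P$ is a localization of $B$, hence a P-domain by \cite[Corollary 1.2]{MZ}, and hence perinormal by Theorem \ref{3}.

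For $P = M$, I would first reduce to the semilocal case by localizing $B$ at $T = B \setminus \bigcup_i M_i$. The ring $B' = B_T$ is a P-domain whose maximal ideals $M_i B'$ are non-valued with residue field $K$, and using surjectivity of $\pi$ one checks $A_M$ equals the analogous pullback $A'$ inside $B'$. So without loss of generality $B$ is semilocal with maximal ideals $M_1, \dots, M_n$, and then $A$ is local with maximal ideal $M = I$. We must show: any local going down overring $D$ of $A$ with maximal ideal $N$ satisfies $D = A_{N \cap A}$.

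If $P := N \cap A \neq M$, then $D$ is a local going down overring of the perinormal ring $A_P$ dominating it, so $D = A_P$. The main obstacle is the case $P = M$, where $D$ dominates $A$ and we must show $D = A$. My plan here is a two-step squeeze. Step one: for each prime $P' \neq M$ of $A$, going down produces $Q' \in \Spec(D)$ with $Q' \cap A = P'$; then $D_{Q'}$ is a local going down overring of the perinormal ring $A_{P'}$ dominating it, so $D_{Q'} = A_{P'}$, hence $D \subseteq A_{P'}$. Intersecting over all such $P'$ and writing $A_{P'} = B_Q$ for the corresponding $Q \in \Spec(B) \setminus \{M_1, \dots, M_n\}$, I crucially invoke the hypothesis that no $M_i$ is a valued prime of $B$: the set $G(B)$ of valued primes of $B$ lies in $\Spec(B) \setminus \{M_1, \dots, M_n\}$, so $B = \bigcap_{P \in G(B)} B_P \supseteq \bigcap_{Q \neq M_i} B_Q \supseteq B$, giving $\bigcap_{Q \neq M_i} B_Q = B$. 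Thus $D \subseteq B$.

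Step two: since $B/I \cong K^n$ is integral over $A/I \cong K$, the extension $A \subseteq B$ is integral, and in particular $D \subseteq B$ is integral. Maximality of each $M_j$ in $B$ then forces $M_j \cap D = N$ for all $j$, giving $N \subseteq \bigcap_j M_j = I$; combined with the domination $I = M \subseteq N$ we get $N = I$. Therefore $D/N = D/I$ is a subring of $B/I = K^n$ containing the diagonal copy of $A/I \cong K$, and it is a field. The final, conceptually clean, observation is that the only subfield of $K^n$ containing the diagonal $K$ is the diagonal itself: for $r = (r_1, \dots, r_n)$ in such a subring with $r_i \neq r_j$, the element $r - (r_i, \dots, r_i)$ would be nonzero with its $i$-th coordinate zero, hence a nonunit in $K^n$, contradicting the field property. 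Thus $D/I = A/I$ inside $B/I$, which with $I \subseteq A$ gives $D \subseteq A$ and hence $D = A$.
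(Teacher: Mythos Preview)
Your proof is correct and follows essentially the same strategy as the paper's: split on whether the relevant prime contains $I$, in the hard case squeeze the overring between $A$ and $B$ using that $B$ is essential and that none of the $M_i$ is a valued prime, then finish via the observation that the diagonal $K$ is the only local ring (in your version, subfield) between $K$ and $K^n$. Your detour through integrality of $A\subseteq B$ to obtain $N=I$ is not needed: since $I\subseteq M\subseteq N$, the quotient $D/I$ is already local, and the paper uses directly that the only \emph{local} ring between $K$ and $K^n$ is $K$.
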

\begin{proof}
Note that $B$ is perinormal by Theorem \ref{3}.
Clearly $I=\ker(\pi)$ is a common ideal of $A$ and $B$.
Let $(C,N)$ be a going down overring of $A$ and set $P=N\cap A$. Assume that  $P\not\supseteq I$. By usual pullback arguments we have $C\supseteq A_P=B_Q$ where $Q=PA_P\cap B$. Moreover since $C$ is a going down extension of the perinormal domain $B_Q$ and $N\cap B_Q=QB_Q$, we get  $A_P=B_Q=C$, so we are done in this case. Assume now  that  $P\supseteq I$. By \cite[Lemma 1.1.6]{FHP}, we may localize $A$ and $B$ in $A-P$ and thus assume that $A$ is local with maximal ideal $P$. Then $A\subseteq C$ satisfies lying over because it satisfies going down and $P\subseteq N$. Let $G$ be the set of valued primes of $B$. For every $H\in G$, select $H'\in \Spec(C)$ such that $H'\cap A=H\cap A$. As none of  $M_1,...,M_n$ is a valued prime, we get $H\not\supseteq I$, so $B_H=A_{H\cap A}=C_{H'}$ because $B_H$ is a valuation domain.
We have
$A\subseteq C\subseteq \cap_{H\in G}C_{H'}=\cap_{H\in G}B_H=B$ because $B$ is an essential domain, so $A\subseteq C\subseteq B$.
We claim that $A=C$. Indeed, as  $I$ is a common of $A,B,C$, we get $A/I=K\subseteq C/I\subseteq B/I=K^n$. Since $C/I$ is local and $K$ is the only local ring between $K$ and $K^n$, we derive that $C/I=K$, so $C=A$. 
\end{proof}

The following pullback construction provides examples of non-perinormal domains. 

\begin{theorem}\label{12}
Let $B$ be a domain, $M$ a maximal ideal of $B$, $\pi:B\rightarrow B/M$  the  canonical map  and $K$ a proper subfield of $B/M$. Then the pulback domain $A=\pi^{-1}(K)$ is not perinormal.
\end{theorem}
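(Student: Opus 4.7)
The plan is to produce a going-down local overring $C$ of $A$ with $C\neq A_{N\cap A}$, where $N$ denotes the maximal ideal of $C$; by the Epstein--Shapiro characterization recalled in the introduction this forces $A$ to fail perinormality. The natural candidate is $C:=B_M$, the localization of $B$ at its maximal ideal $M$ (the degenerate case $M=0$ makes $A$ a field with no non-trivial overrings and may be ignored).

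First I would record the easy ingredients. Since $M=\ker\pi\subseteq\pi^{-1}(K)=A$, $M$ is a common ideal of $A$ and $B$, and $M_A:=M\cap A=M$ is maximal in $A$ with $A/M_A=K$. With $N=MB_M$ one computes $N\cap A=M_A$, so $A_{N\cap A}=A_M$ has residue field $K$. On the other hand $C=B_M$ has residue field $B/M$, and the hypothesis $K\subsetneq B/M$ immediately gives $C\neq A_{N\cap A}$.

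The substantive step is to show that $C=B_M$ is a going-down overring of $A$. The approach is to use the standard pullback fact that contraction defines an order-preserving bijection $\Spec(B)\setminus V(M)\to\Spec(A)\setminus\{M_A\}$, and to refine this to an order-preserving bijection between primes of $B$ strictly contained in $M$ and primes of $A$ strictly contained in $M_A$. Together with the correspondence $M\leftrightarrow M_A$, one obtains an order-preserving identification of $\Spec(B_M)$ with $\{P\in\Spec(A):P\subseteq M_A\}$, from which going down for $A\subseteq B_M$ follows at once.

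The main obstacle is the refinement, namely the claim that if $Q\in\Spec(B)$ satisfies $Q\not\supseteq M$ and $Q\cap A\subseteq M_A$, then $Q\subseteq M$. I would prove this by contradiction using a lifting trick. Suppose $b\in Q\setminus M$. If $b\in A$ then $b\in Q\cap A\subseteq M_A=M$, contradicting $b\notin M$. Otherwise $\pi(b)\in(B/M)\setminus K$ is nonzero, and lifting $\pi(b)^{-1}$ through the surjection $\pi$ to some $b'\in B$ gives $\pi(bb')=1\in K^*$, so $bb'\in A\setminus M$. But $bb'\in Q$ forces $bb'\in Q\cap A\subseteq M$, a contradiction. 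Once this refinement is in hand, going down of $C$ over $A$ is immediate and the proof concludes by the Epstein--Shapiro criterion.
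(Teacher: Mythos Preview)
Your argument is correct and follows essentially the same route as the paper. The paper verifies going down for $A\subseteq B$ itself (using the flatness formulation of perinormality) rather than for $A\subseteq B_M$, but the substantive step is identical: your lifting trick with $b,b'$ and $\pi(bb')=1$ is exactly the argument the paper uses (with $g,h$ and $\pi(gh)=1$) to show that a prime of $B$ lying over a prime strictly inside $M_A$ must itself sit inside $M$, and the remaining spectral bookkeeping is the same $A_f=B_f$ observation for $f\in M$.
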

\begin{proof}
Clearly $M$ is a maximal ideal of both $A$ and $B$. 
We claim that  going down holds for $A\subseteq B$. Let $Q\in \Spec(B)$ and $P=Q\cap A$. It suffices to prove that the map $\Spec(B_Q)\rightarrow \Spec(A_P)$ is surjective. If $Q\neq M$ and $f\in M-Q$, then $A_f=B_f$, so $A_P=(A_f)_{PA_f}=(B_f)_{QB_f}=B_Q$. Assume that $Q=M$, so $P=M$. Let $N\in \Spec(A)$ be a proper subideal of $M$. As done above, we get $A_N=B_H$ where $H=NA_N\cap B$. In particular, $H$ lies over $N$. We show that $H\subseteq M$. If not, select $g\in H-M$ and $h\in B$ such that $\pi(gh)=1$. Then $gh\in N-M$, a contradiction. It remains that $H\subseteq M$, so going down holds for $A\subseteq B$. But $A_M\neq B_M$, because $A_M/MA_M=K$ which is a proper subfied of $B/M$. So $A$ is not perinormal.
\end{proof}

Let $A$ be a domain.
Recall (\cite{S} and \cite{Z1}), that  $A$ is called an {\em almost GCD domain} or {\em AGCD} if for each $x,y\in A$, there exists $n\geq 1$ such that $x^nA\cap y^nA$ is a principal ideal.

Recall that the class group of a domain was introduced by Bouvier and Zafrullah (see \cite{B} and \cite{BZ})  in order to extend the divisor class group concept from the Krull domains case to arbitrary domains. For simplicity, we choose to recall this definition  only in the PVMD case. 
Let $A$ be a PVMD. The set $D(A)=\{ H_v\mid H$ finitely generated nonzero fractional ideal of $A\}$ is a group  under the operation $(H_1,H_2)\mapsto (H_1H_2)_v$ called the $v$-multiplication. The {\em class group} $Cl(A)$ of $A$ is defined as $D(A)$ modulo the subgroup of all principal nonzero fractional ideals. 
By \cite[Corollary 1.5]{BZ}, the GCD domains are exactly the PVMDs with zero class group. The next lemma collects some  known facts.

\begin{lemma}{\em (\cite{Z1} and \cite{AZ})}\label{7}

$(a)$ Let $A$ be an AGCD domain and $A'$ its integral closure.  Then $A\subseteq A'$ is a root extension (that is, every $x\in A'$ has some power in $A$).

$(b)$ The PVMDs with torsion class group are exactly the integrally closed AGCD domains.

$(c)$ Let $A$ be an AGCD domain. Then every flat overring of $A$ is a fraction ring of $A$.
\end{lemma}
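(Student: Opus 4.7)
The plan is to handle the three parts separately, in each case reducing the work to arguments in \cite{Z1} and \cite{AZ}.

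For $(a)$, I would start with $x\in A'$ and write $x=a/b$ with $a,b\in A$, together with a monic integral dependence $x^m+c_1x^{m-1}+\cdots+c_m=0$ over $A$. Apply the AGCD hypothesis to the pair $a,b$ to produce $n\geq 1$ with $a^nA\cap b^nA=dA$ principal, and write $a^n=du$, $b^n=db^n/d\cdot$ --- more cleanly, $a^n=du$ and $b^n=dv$ for $u,v\in A$. The key observation is that the principality of $a^nA\cap b^nA$ forces $(uA+vA)_v=A$, i.e.\ $u$ and $v$ are \emph{$v$-coprime}. Now $x^n=u/v$ inherits an integral dependence from the one for $x$; clearing denominators writes a power of $v$ times an element of $A$ as an element of $uA$, and the $v$-coprimality of $u$ and $v$ collapses this into $v$ being a unit of $A$. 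Hence $x^n\in A$. This is exactly the argument of \cite[Theorem 3.1]{Z1}.

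For $(b)$, the equivalence can be read off from (a) together with an unpacking of the AGCD definition. If $A$ is an integrally closed AGCD, then (a) plus integrality gives $A=A'$, while the AGCD principal intersections $a^nA\cap b^nA=dA$ translate, on passing to the $v$-operation, into $((a,b)^n)_v$ being principal. This simultaneously forces $A$ to be a PVMD and exhibits every class $[(a,b)_v]\in\mathrm{Cl}(A)$ as $n$-torsion, so $\mathrm{Cl}(A)$ is torsion. Conversely, given a PVMD with torsion class group, write $((a,b)^n)_v=(c)$; since in a PVMD $a^nA\cap b^nA = (((a,b)^n)_v \cdot \mbox{something})$ in a principal way, one obtains the AGCD condition, and PVMDs are completely integrally closed. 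This is \cite{AZ}.

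For $(c)$, let $B$ be a flat overring of $A$. By Richman's theorem \cite{R}, $B=\bigcap_{P\in\mathcal F}A_P$ where $\mathcal F$ is the set of primes of $A$ surviving in $B$. The AGCD hypothesis, applied to pairs of numerators and denominators of generators of $B$, provides enough principal intersections $a^nA\cap b^nA=dA$ to express elements of $B$ as honest fractions of $A$ with denominators in the multiplicative set $S=A\setminus\bigcup_{P\in\mathcal F}P$. One concludes $B=A_S$. This is the content of \cite{AZ}.

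The main obstacle, and the step I would spend the most care on, is the $v$-coprimality assertion used in $(a)$: the implication "$a^nA\cap b^nA$ principal $\Rightarrow$ $u,v$ are $v$-coprime" requires a careful manipulation of the $v$-operation and is what drives the whole lemma. Once $(a)$ is secured, $(b)$ becomes largely a translation between the language of principal intersections and the language of the class group, and $(c)$ follows by combining Richman's description of flat overrings with the denominator-generating power of the AGCD property.
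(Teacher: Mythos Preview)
The paper's proof is purely citational: it records that (a) is \cite[Theorem 3.1]{Z1}, (b) is \cite[Theorem 3.9]{Z1}, and (c) is \cite[Theorem 3.5]{AZ}, with no further argument given. Your sketches go well beyond this and are broadly on target, but two points deserve correction. First, for (b) you attribute the result to \cite{AZ}, whereas the paper points to \cite[Theorem 3.9]{Z1}; the characterization of integrally closed AGCD domains as PVMDs with torsion class group is already in Zafrullah's earlier paper. Second, in your outline of (a), if $d$ generates $a^nA\cap b^nA$ then $d$ is a common \emph{multiple} of $a^n$ and $b^n$, not a common divisor, so the relations should read $d=a^n v=b^n u$ (and then $x^n=a^n/b^n=u/v$) rather than $a^n=du$, $b^n=dv$; you have conflated the LCM with the GCD. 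With that adjustment your plan --- extract $v$-coprime $u,v$ and use the integral equation to force $v$ to be a unit --- agrees with the argument in \cite{Z1}.
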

\begin{proof}
Part $(a)$ is \cite[Theorem 3.1]{Z1}, part $(b)$ is  \cite[Theorem 3.9]{Z1}, part $(c)$ is \cite[Theorem 3.5]{AZ}.
\end{proof}

According to \cite{ES},  a domain $A$ is called {\em globally perinormal} if every going down overring of $A$ is a fraction ring of $A$. In \cite[Theorem 6.3]{ES}, it was shown that a Krull domain with torsion divisor class group is globally perinormal. In Theorem \ref{6} we extend this result to PVMDs. 

\begin{theorem}\label{6}
If $A$ is a PVMD  with torsion class group (e.g a GCD domain), then $A$  is globally perinormal.
\end{theorem}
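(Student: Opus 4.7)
The plan is to combine the perinormality result for PVMDs with the flat-overrings-are-fraction-rings property of AGCD domains, using Lemma \ref{7} as the bridge.

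First I would invoke perinormality: since a PVMD is a P-domain (as noted before Corollary \ref{5}), Theorem \ref{3} gives that $A$ is perinormal. Hence every going down overring $B$ of $A$ is a flat $A$-module (by the definition recalled in the introduction).

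Next I would recognize $A$ as an AGCD domain. By Lemma \ref{7}(b), the PVMDs with torsion class group are exactly the integrally closed AGCD domains; in particular, $A$ is AGCD. Then Lemma \ref{7}(c) says that every flat overring of an AGCD domain is a fraction ring of $A$. Combining the two observations, every going down overring of $A$ is a fraction ring of $A$, which is the definition of globally perinormal.

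The parenthetical case of a GCD domain is subsumed because a GCD domain is a PVMD with trivial (hence torsion) class group, so it falls into the hypothesis. There is no real obstacle here: the statement is essentially a one-line consequence of Theorem \ref{3} plus Lemma \ref{7}(b),(c), with the only thing to check being that the ``going down overring'' hypothesis is strong enough to trigger flatness (which is exactly perinormality) before feeding into Lemma \ref{7}(c).
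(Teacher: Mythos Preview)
Your proposal is correct and follows essentially the same route as the paper: the paper cites Corollary \ref{5} (which is just Theorem \ref{3} specialized to PVMDs) for perinormality, then says to combine parts (b) and (c) of Lemma \ref{7}, which is exactly the two-step argument you spell out.
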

\begin{proof}
By Corollary \ref{5}, $A$ is perinormal. Combine parts $(b)$ and $(c)$ of Lemma \ref{7} to complete the proof.
\end{proof}

\begin{corollary}\label{13}
For an AGCD domain $A$, the following assertions are equivalent.

$(a)$ $A$ is globally  perinormal,

$(b)$ $A$ is perinormal,

$(c)$ $A$ is integrally closed.
\end{corollary}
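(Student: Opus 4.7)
The plan is to establish the cycle $(c)\Rightarrow(a)\Rightarrow(b)\Rightarrow(c)$.

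The implication $(c)\Rightarrow(a)$ is a direct application of the results already assembled: an integrally closed AGCD domain is a PVMD with torsion class group by Lemma \ref{7}(b), hence globally perinormal by Theorem \ref{6}. The implication $(a)\Rightarrow(b)$ is immediate from the definitions since any fraction ring of $A$ is $A$-flat.

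The substantive implication is $(b)\Rightarrow(c)$. Let $A'$ denote the integral closure of $A$; the goal is to force $A=A'$. By Lemma \ref{7}(a), $A\subseteq A'$ is a root extension. I would first observe that a root extension $A\subseteq A'$ induces a bijective, order-preserving (in both directions) map $\Spec(A')\to\Spec(A)$: the unique prime of $A'$ lying over $P\in\Spec(A)$ is $\{x\in A':x^n\in P\text{ for some }n\geq 1\}$. From this, going down for $A\subseteq A'$ is automatic, so $A'$ is a going down overring of $A$. Perinormality of $A$ then yields that $A\subseteq A'$ is flat, and Lemma \ref{7}(c) upgrades this to $A'=S^{-1}A$ for some multiplicative subset $S\subseteq A$. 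Finally, since every $s\in S$ has its inverse $1/s\in A'$ integral over $A$, clearing denominators in an integral equation for $1/s$ shows that $s$ is a unit of $A$; hence $S^{-1}A=A$, giving $A=A'$.

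The only step requiring real care is the verification of going down for $A\subseteq A'$, and the cleanest way to handle it is the Spec-bijection argument above for root extensions (which is why the AGCD hypothesis is essential — without it, integral closure need not satisfy going down). Everything else is a bookkeeping application of Lemma \ref{7} and Theorem \ref{6}.
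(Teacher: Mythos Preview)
Your proof is correct and follows essentially the same route as the paper: root extension $\Rightarrow$ order-isomorphism on $\Spec$ $\Rightarrow$ going down for $A\subseteq A'$ $\Rightarrow$ flatness by perinormality. The only divergence is in the last step of $(b)\Rightarrow(c)$: the paper concludes $A=A'$ directly from \cite[Proposition~2]{R} (a flat integral overring is trivial), while you instead invoke Lemma~\ref{7}(c) to write $A'=S^{-1}A$ and then argue that each $s\in S$ is a unit---both endings are valid, and yours has the mild advantage of staying within the lemmas already stated in the paper.
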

\begin{proof}
$(a)\Rightarrow (b)$ is clear. $(b)\Rightarrow (c)$ Let $A'$ 
be the  integral closure  of $A$. By part $(a)$ of Lemma \ref{7}, it follows that $A\subseteq A'$ is a root extension. By \cite[Theorem 2.1]{AZ}, the natural map $\Spec(A')\rightarrow \Spec(A)$ is an order isomorphism, hence $A\subseteq A'$ satisfies going down. Since $A$ is perinormal, it follows that $A\subseteq A'$ is flat, so $A=A'$ cf. \cite[Proposition 2]{R}.
$(c)\Rightarrow (a)$ Apply part $(b)$ of Lemma \ref{7} and Proposition \ref{6}.
\end{proof}

In the last result of this paper we slightly improve Theorem \ref{6}
(note that the Pr\"ufer domain case of Theorem \ref{14} is \cite[Theorem 27.5]{G}).  Let $A$ be a domain. An  overring $B$ of $A$ is called {\em t-linked} (over $A$) \cite{DHLZ} if, for each finitely generated nonzero ideal $I$  of $A$ such that $I^{-1}=A$, one has $(IB)^{-1}=B$. By \cite[Proposition 2.2]{DHLZ}, every flat overring of $A$ is t-linked over $A$. So a perinormal domain whose t-linked overrings are fraction rings of $A$ is globally perinormal.
 
\begin{theorem}\label{14}
Assume that $A$ is a PVMD which satisfies the following condition:
 for each finitely generated nonzero ideal $I$  of $A$, we have $I^n\subseteq bA\subseteq I_v$ for some $n\geq 1$ and  $b\in A$. Then $A$ is  globally perinormal.
\end{theorem}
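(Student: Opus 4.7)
The plan is to exploit the reduction indicated in the paragraph immediately before the theorem: since $A$ is a PVMD, it is perinormal by Corollary \ref{5}, so every going down overring of $A$ is flat (and hence, by \cite{DHLZ}, t-linked). I would therefore reduce to proving that an arbitrary flat overring $B$ of $A$ is a fraction ring of $A$. Setting $S = A \cap B^{\times}$, the inclusion $A_S \subseteq B$ is automatic, so the task reduces to showing that for each $b \in B$ the conductor $(A :_A b) = \{a \in A : ab \in A\}$ meets $S$.

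Two preliminary observations would prepare the appeal to the hypothesis. First, flatness of $B/A$ together with the standard colon-ideal formula for flat extensions gives $(A :_A b) B = (B :_B bB) = B$, the last equality being immediate because $b \in B$. Hence one may choose a finitely generated ideal $J_0 \subseteq (A :_A b)$ with $J_0 B = B$. Second, writing $b = x/y$ with $x,y \in A$, one checks $(A :_A b) = (y/x)A \cap A$, and as the intersection of two principal fractional ideals (each $v$-closed), $(A :_A b)$ is itself $v$-closed.

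With these in place, I would apply the hypothesis to $J_0$ to obtain $m \geq 1$ and $d \in A$ with $J_0^m \subseteq dA \subseteq (J_0)_v$. Since $J_0 B = B$, also $J_0^m B = B$; combined with $J_0^m \subseteq dA$, this forces $dB = B$, so $d$ is a unit in $B$ and hence belongs to $S$. Conversely, $d \in (J_0)_v \subseteq ((A :_A b))_v = (A :_A b)$ by the $v$-closedness just noted, so $db \in A$ and $b = (db)/d \in A_S$. Thus $B \subseteq A_S$, and $B = A_S$.

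The main conceptual obstacle is producing a single element of $A$ that simultaneously lies in the conductor $(A :_A b)$ and is a unit in $B$. The two-sided hypothesis $J_0^m \subseteq dA \subseteq (J_0)_v$ is perfectly calibrated for this: the left inclusion, combined with $J_0 B = B$, delivers invertibility of $d$ in $B$, while the right inclusion, combined with the $v$-closedness of $(A :_A b)$, forces $d$ back into the conductor.
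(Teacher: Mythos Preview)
Your argument is correct. The colon-ideal step $(A:_A b)B=(yA:_A x)B=(yB:_B x)=B$ follows from flatness since $xA$ is principal, and your observation that $(A:_A b)=A\cap (y/x)A$ is divisorial is exactly what is needed to push $d\in (J_0)_v$ back into the conductor. The remaining steps are routine.

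The paper, however, proceeds differently: it simply invokes \cite[Theorem 1.3]{DHLZ1}, which asserts that under the stated ideal-theoretic condition every $t$-linked overring of $A$ is a fraction ring of $A$, and then applies the reduction in the paragraph preceding the theorem (perinormal plus ``$t$-linked $\Rightarrow$ localization'' gives globally perinormal). Your proof is in effect a self-contained derivation of the flat-overring case of that cited result. What you gain is independence from \cite{DHLZ1} and a transparent explanation of why the two-sided hypothesis $I^n\subseteq bA\subseteq I_v$ is exactly the right shape; what the paper gains is brevity and the slightly stronger intermediate statement (covering all $t$-linked overrings, not just the flat ones, though only the flat case is needed here).
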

\begin{proof}
By \cite[Theorem 1.3]{DHLZ1},  every t-linked overring of $A$ is a fraction ring of $A$. As a PVMD is perinormal (due to Corollary \ref{5}), the paragraph preceding this theorem applies to complete the proof.
\end{proof}

{\bf Acknowledgements.} 
We thank N. Epstein and J. Shapiro for sending us the latest version of their paper \cite{ES}.
The first author gratefully acknowledges the warm
hospitality of the Abdus Salam School of Mathematical Sciences GC University Lahore during his many visits in the period 2006-2015. 
The second author is highly grateful to ASSMS GC University Lahore, Pakistan in supporting and facilitating this researh.
\\[3mm]

{}

\end{document}